\documentclass[12pt]{article}
\usepackage[utf8]{inputenc}
\usepackage[T1]{fontenc}
\usepackage{lmodern}
\usepackage{amsmath,amssymb,amsthm}
\usepackage{geometry}
\usepackage{indentfirst}
\usepackage{booktabs}
\usepackage{xcolor}
\usepackage{hyperref}
\newtheorem{theorem}{Theorem}
\newtheorem{lemma}{Lemma}
\newtheorem{proposition}{Proposition}
\newtheorem{corollary}{Corollary}
\theoremstyle{remark}
\newtheorem{remark}{Remark}
\theoremstyle{definition}
\newtheorem{definition}{Definition}

\title{On the Rank Condition for Perfect Cuboids}
\author{Valery Asiryan\\[2mm]
\small\texttt{asiryanvalery@gmail.com}}
\date{\small October 7, 2026}

\begin{document}
\maketitle

\begin{abstract}
Paulsen and West associate perfect cuboids with triples of rational points on a congruent-number elliptic curve and assert that the corresponding curve has rank at least two. The proof of their Theorem~4.2 contains a gap: translation of a generator by a point of order two interchanges only the odd-indexed terms of two coordinate sequences, rather than the entire sequences. We exhibit two explicit counterexamples to that intermediate assertion and give a complete proof of the rank obstruction. The division-polynomial argument reduces a hypothetical rank-one configuration to one remaining parity pattern. Ingram's primitive-divisor theorem then forces an additive relation among its indices, which is incompatible with a positivity lemma on the elliptic curve. More precisely, the three points of every positive cuboid configuration generate a subgroup of rank at least two, even when the ambient curve has larger rank. The proof is unconditional.

\medskip
\noindent{\bf Keywords:} perfect cuboids; congruent numbers; elliptic curves; Mordell--Weil rank; elliptic divisibility sequences; primitive divisors.

\smallskip
\noindent{\bf Mathematics Subject Classification (2020):} Primary 11G05; Secondary 11D25, 11B39.
\end{abstract}


\section{Introduction}\label{sec:introduction}

A perfect cuboid is a rectangular box whose three edges, three face diagonals, and space diagonal all have positive integer lengths. Its existence is equivalent to the existence of such a box with positive rational lengths, since denominators can be cleared simultaneously.

Paulsen and West~\cite{PW} reformulate this problem in terms of rational points on congruent-number elliptic curves. Their Proposition~3.1 describes the relevant triples, and their Theorem~4.2 asserts that the associated curve must have Mordell--Weil rank at least two. The proof of the latter statement uses two sequences of positive abscissas and a change of generator intended to interchange them. That interchange does not hold at even indices. The subsequent argument therefore leaves a parity case untreated.

\paragraph{Goal and method.}
The purpose of this article is to supply the missing argument and to state its precise consequence for the original cuboid problem. The valid division-polynomial part of the Paulsen--West proof is retained and its scope is made explicit. The remaining case is resolved by combining a primitive denominator prime with a geometric restriction on sums of three positive points. This also gives a statement about the subgroup generated by the three points, without assuming that it is the whole Mordell--Weil group.

Let $N$ be a positive squarefree integer. We write
\begin{equation}\label{eq:models}
 D_N:\ N\eta^2=t-t^3,
 \qquad E_N:\ Y^2=X^3-N^2X.
\end{equation}
The identity on both curves is the point $O$ at infinity, and the rational isomorphism is
\begin{equation}\label{eq:isomorphism}
 \iota:D_N\longrightarrow E_N,
 \qquad (t,\eta)\longmapsto(-Nt,N^2\eta).
\end{equation}
On $D_N$ put
\begin{equation}\label{eq:torsion}
 S=(0,0),\qquad T=(-1,0),\qquad U=(1,0)=S+T.
\end{equation}
The standard torsion calculation for congruent-number curves gives
\[
 D_N(\mathbb{Q})_{\rm tors}=\{O,S,T,U\}\simeq(\mathbb{Z}/2\mathbb{Z})^2;
\]
see, for example, the treatment of congruent numbers in Koblitz~\cite{Koblitz}.

\begin{definition}\label{def:configuration}
A \emph{positive cuboid configuration} on $D_N$ is a triple of rational points $Q_i=(t_i,\eta_i)$ such that
\begin{equation}\label{eq:compatibility}
 0<t_i<1\quad(i=1,2,3),
 \qquad t_1t_2+t_1t_3+t_2t_3=1.
\end{equation}
No condition on the signs of the ordinates is imposed.
\end{definition}

\begin{theorem}\label{thm:main}
For every positive cuboid configuration on $D_N$, the subgroup
\[
 H=\langle Q_1,Q_2,Q_3\rangle\subseteq D_N(\mathbb{Q})
\]
has rank at least two. In particular, $\operatorname{rank} E_N(\mathbb{Q})\ge2$.
\end{theorem}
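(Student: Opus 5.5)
We argue by contradiction and suppose that $H$ has rank at most one. The torsion subgroup is $\{O,S,T,U\}$. A point with $0<t<1$ has $\eta^2=(t-t^3)/N>0$, so $\eta\neq0$ and it is not torsion. Hence $H$ has rank exactly one. Then $H\otimes\mathbb{Q}$ is spanned by one point, and we may fix a generator $P$ of the free part of $H+D_N(\mathbb{Q})_{\rm tors}$. Each $Q_i$ can then be written as $Q_i=n_iP+\tau_i$ with $n_i\in\mathbb{Z}\setminus\{0\}$ and $\tau_i$ torsion. Replacing $P$ by $-P$ and using $[-1](t,\eta)=(t,-\eta)$, we may assume $n_i>0$. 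The first step is to record how translation by $S$, $T$, $U$ acts on the $t$-coordinate. Transporting the standard formulas on $E_N$ through $\iota$, translation by $S$ sends $t\mapsto -1/t$, translation by $T$ sends $t\mapsto (1-t)/(1+t)$ up to sign conventions, and $U$ is their composite. We will also note which of these maps preserve the interval $(0,1)$. This lets us express each $t_i$ in terms of $x(n_iP)$ and the parity data of $\tau_i$. It is exactly here that the gap of Paulsen--West appears: translation by a $2$-torsion point swaps the relevant coordinate sequences only at odd indices. We will therefore sort the triple $(n_1,n_2,n_3)$ by parities together with torsion classes.

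The second step writes $x(nP)=A_n/B_n^2$ in lowest terms, so that $(B_n)$ is an elliptic divisibility sequence built from division polynomials. We substitute into $t_1t_2+t_1t_3+t_2t_3=1$ and clear denominators. For most parity patterns, the retained division-polynomial argument should already give a contradiction. Reducing modulo $2$-adic and $N$-adic information, the identity cannot hold, for instance because $\psi_n$ for $n$ even is divisible by $\psi_2=2Y$, which forces incompatible valuations. This is the part of the argument we expect to survive from the original proof. Carrying it out will leave a single residual pattern, for example all $n_i$ even, or two odd and one even with a specified torsion twist.

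For the residual pattern we invoke Ingram's primitive-divisor theorem for congruent-number curves. For every index $n$ beyond an explicit small bound, $B_n$ has a primitive prime divisor $p$, that is, one dividing no $B_m$ with $m<n$. Let $n=\max n_i$, and take a primitive prime $p\mid B_{n}$. After clearing denominators, $p$ divides only one term of the compatibility identity unless another $n_j$ is a multiple of $n$, or unless the denominators combine through the relation satisfied by a point and its translate. Comparing $p$-adic valuations should then force an additive relation among the indices, such as $n_3=n_1+n_2$, equivalently $Q_3=\pm Q_1\pm Q_2+\tau$. Small indices below Ingram's bound will be handled by direct computation on the finitely many possible multiples.

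The final step, which we expect to be the main obstacle, is a positivity lemma. If $Q_1,Q_2\in D_N(\mathbb{Q})$ have $t$-coordinates in $(0,1)$, then the $t$-coordinate of $Q_1+Q_2$, and of its relevant torsion translates, cannot lie in $(0,1)$ while also satisfying the compatibility relation. We plan to prove this on the real locus. The set $\{0<t<1\}$ is one component of $D_N(\mathbb{R})$: it is the identity component or the egg, depending on the isomorphism with $E_N$. Under $\iota$ it maps to $-N<X<0$, which is the non-identity component of $E_N(\mathbb{R})$. The sum of two points on the egg lies on the identity component, so $Q_1+Q_2$ has $X>N$ or $X\in$ another range. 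We then check directly which torsion translates return to the egg, and verify that for those translates the chord-tangent formula contradicts $t_1t_2+t_1t_3+t_2t_3=1$. Once the lemma is established, it contradicts the additive relation from the previous step. Therefore $\operatorname{rank}H\ge2$, and hence $\operatorname{rank}E_N(\mathbb{Q})\ge2$ via $\iota$.
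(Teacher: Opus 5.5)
Your outline follows the paper's architecture: a parity reduction, a primitive prime at the largest index, an additive relation, and a positivity obstruction. However, the step you call the main obstacle fails as you formulate it. Both torsion points on the egg, $S=(0,0)$ and $U=(1,0)$, send the identity component back into $0<t<1$, so a component argument cannot separate them. For $U$ your claim is in fact false. A function in $L(4O)=\langle1,t,\eta,t^2\rangle$ with a simple zero at $U$ is, after scaling, $\eta-\alpha(t^2-1)-\beta(t-1)$. Removing the root $t=1$ leaves a cubic whose roots satisfy, with $r=\beta/\alpha$, $s_2=r^2-1+1/(N\alpha^2)$. So compatibility means $N\alpha^2(2-r^2)=1$, and then $s_1=r^2-2r-3$ and $s_3=(1+r)^2$. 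For $r=-1.41$ the cubic is $t^3-1.8081t^2+t-0.1681$, which has three roots in $(0,1)$, near $0.32$, $0.57$ and $0.92$. Hence for every $N$ there are real compatible triples with all $t_i\in(0,1)$ and $Q_3=-Q_1-Q_2+U$. No real-locus or chord--tangent check can exclude the $U$-translate. What is true, and what the paper proves, is only $\sigma_1Q_1+\sigma_2Q_2+\sigma_3Q_3\ne S$. There the function must be $\eta-\alpha t^2-\beta t$, Vieta gives $s_2=\beta^2/\alpha^2=1$, and then $\beta=\alpha$ forces $s_1<0$ while $\beta=-\alpha$ forces $\prod(1-t_i)=0$.

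Consequently the reduction must fix the torsion so that the additive relation produces exactly $S$, and yours does not. You leave the residual pattern open (even suggesting all $n_i$ even), and the mechanism you cite, $2$-adic and $N$-adic valuations via $\psi_2=2Y$, is not what excludes the other patterns. The paper proceeds as follows:
\begin{itemize}
\item It takes $G$ with $0<z=t(G)<1$ and writes each abscissa as a ratio of polynomials in $z$.
\item It shows the cleared compatibility polynomial has leading coefficient $\pm1$, or $\pm2$ together with a mod-$2$ congruence when all three entries come from $B$. The rational-root theorem would then force $z\in\mathbb{Z}$.
\item The bookkeeping uses that $G\mapsto G+T$ swaps only the odd-indexed entries, and the all-even case is removed by the halving $G'=[2]G+S$.
\end{itemize}
The outcome is the pattern $Q_1=[a]G$, $Q_2=[b]G+T$, $Q_3=[c]G+S+T$, for which $b=a+c$ or $c=a+b$ gives $S$.

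The primitive-divisor step needs three further things you do not supply. First, the compatibility must be rewritten in unshifted abscissas, $X_aX_b+X_aX_c-X_bX_c+N^2=0$. Second, the maximal index must be moved onto a translated entry. If it sits on $X_a$, one only gets $X_b\equiv-X_c\pmod p$, which yields no index relation. Third, you need Ingram's negative-abscissa lemma, valid for all $m\ge3$. A ``direct computation'' for small indices is not a finite check, since $N$ and $G$ vary.

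Finally, replacing $P$ by $-P$ flips all the $n_i$ at once; to make them positive you must negate individual $Q_i$ instead.
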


Here and throughout, the rank of a finitely generated abelian group $H$ means $\dim_{\mathbb{Q}}(H\otimes_{\mathbb{Z}}\mathbb{Q})$. Theorem~\ref{thm:main} is proved in Sections~\ref{sec:positivity}--\ref{sec:completion}. The ambient-rank assertion is the conclusion stated by Paulsen and West; the additional precision is that even a subgroup of rank one inside a curve of higher rank cannot support such a triple.


\section{The consequence for a perfect cuboid}\label{sec:cuboid}

We record the map from a cuboid explicitly, so that the invariant to which the rank condition applies is fixed without a normalization ambiguity. For $q\in\mathbb{Q}_{>0}$, let $\operatorname{sf}(q)$ denote the unique positive squarefree integer representing $q$ in $\mathbb{Q}^{\times}/\mathbb{Q}^{\times2}$.

\begin{proposition}\label{prop:cuboid-map}
Suppose $a,b,c,d,e,f,g\in\mathbb{Q}_{>0}$ satisfy
\begin{equation}\label{eq:cuboid}
 \begin{gathered}
 d^2=a^2+b^2,\qquad e^2=a^2+c^2,\qquad f^2=b^2+c^2,\\
 g^2=a^2+b^2+c^2.
 \end{gathered}
\end{equation}
Set $N=\operatorname{sf}(abcg/2)$ and choose $s\in\mathbb{Q}_{>0}$ with $abcg/2=Ns^2$. Then the points
\begin{equation}\label{eq:cuboid-points}
 \begin{aligned}
 Q_1&=\left(\frac{ag}{de+bc},\frac{2s}{de+bc}\right),\\
 Q_2&=\left(\frac{bg}{df+ac},\frac{2s}{df+ac}\right),\\
 Q_3&=\left(\frac{cg}{ef+ab},\frac{2s}{ef+ab}\right)
 \end{aligned}
\end{equation}
form a positive cuboid configuration on $D_N$.
\end{proposition}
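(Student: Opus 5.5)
We verify three things directly from the relations \eqref{eq:cuboid}: each $Q_i$ lies on $D_N$, each abscissa satisfies $0<t_i<1$, and the abscissas satisfy $t_1t_2+t_1t_3+t_2t_3=1$. Positivity is immediate because all seven lengths are positive, so $t_i>0$. The key observation is that the three abscissas are half-angle tangents of a triangle. Concretely, we will show that $t_1=\tan(\alpha/2)$ with $\cos\alpha=(bc)/(de)$ and $\sin\alpha=ag/(de)$, and similarly for $t_2,t_3$ with the roles of $a,b,c$ permuted. Since $\tan(\alpha/2)=\sin\alpha/(1+\cos\alpha)$, this gives $t_1=ag/(de+bc)$.

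\textbf{Step 1.} Check that $\cos\alpha$ and $\sin\alpha$ are consistent, i.e.\ $(bc)^2+(ag)^2=(de)^2$. Expanding the left side gives $b^2c^2+a^2(a^2+b^2+c^2)$. The right side is $(a^2+b^2)(a^2+c^2)=a^4+a^2b^2+a^2c^2+b^2c^2$. These agree. Since $bc>0$, we get $\cos\alpha\in(0,1)$, so $\alpha\in(0,\pi/2)$ and $t_1\in(0,1)$. Equivalently and more elementarily, $t_1<1$ iff $ag<de+bc$, which follows from $ag<de$, itself a consequence of the identity just proved.

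\textbf{Step 2.} Prove the curve equation. From $t=\sin\alpha/(1+\cos\alpha)$ we get $1-t^2=2\cos\alpha/(1+\cos\alpha)$. Hence
\[t_1-t_1^3=t_1(1-t_1^2)=\frac{2\sin\alpha\cos\alpha}{(1+\cos\alpha)^2}=\frac{2\,ag\,bc}{(de+bc)^2}.\]
Since $abcg=2Ns^2$, this equals $4Ns^2/(de+bc)^2=N\eta_1^2$ with $\eta_1=2s/(de+bc)$. The computations for $Q_2,Q_3$ are identical after permuting $a,b,c$, using $f^2=b^2+c^2$.

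\textbf{Step 3.} This is the main point: prove the compatibility relation. With half-angle tangents, $t_1t_2+t_1t_3+t_2t_3=1$ is equivalent to $\alpha+\beta+\gamma=\pi$ for angles in $(0,\pi)$. Since $\alpha,\beta,\gamma\in(0,\pi/2)$, it suffices to show that $\cos(\alpha+\beta)=-\cos\gamma$ together with a sine check, or directly that $\tan(\alpha/2+\beta/2)=\cot(\gamma/2)$. I would compute $\cos\alpha\cos\beta-\sin\alpha\sin\beta$ with
\[\cos\alpha=\frac{bc}{de},\quad \sin\alpha=\frac{ag}{de},\quad \cos\beta=\frac{ac}{df},\quad \sin\beta=\frac{bg}{df}.\]
This gives
\[\frac{abc^2-abg^2}{d^2ef}=\frac{-ab(a^2+b^2)}{d^2ef}=-\frac{ab}{ef}=-\cos\gamma,\]
using $g^2-c^2=d^2$. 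Similarly,
\[\sin(\alpha+\beta)=\frac{g(ac^2\cdot)\dots}{}\]
which works out to $\frac{cg(a^2+b^2)}{d^2ef}=\frac{cg}{ef}=\sin\gamma$. Hence $\alpha+\beta+\gamma\equiv\pi \pmod{2\pi}$, and the range forces equality. The half-angle tangent identity $\tan(x)\tan(y)+\tan(y)\tan(z)+\tan(z)\tan(x)=1$ for $x+y+z=\pi/2$ then finishes. Alternatively, I could avoid trigonometry by directly clearing denominators and reducing using \eqref{eq:cuboid}, but the angle argument keeps the algebra transparent. This sum identity is the only nonroutine step.
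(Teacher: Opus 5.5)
Your proof is correct and follows essentially the same route as the paper. Both identify $t_i$ as half-angle tangents of angles with $\cos\alpha_1=bc/(de)$ and $\sin\alpha_1=ag/(de)$ (and cyclically), and both deduce compatibility from $\alpha_1+\alpha_2+\alpha_3=\pi$. The only difference is how that angle sum is obtained: the paper gets it for free by recognizing the $\alpha_i$, via the cosine rule, as the angles of the acute triangle with sides $af$, $be$, $cd$, whereas you verify it directly with the addition formulas. Your garbled intermediate for $\sin(\alpha+\beta)$ should simply read $(a^2cg+b^2cg)/(d^2ef)$.
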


\begin{proof}
Put $h=de+bc$. The identity
\[
 d^2e^2-b^2c^2=a^2g^2
\]
gives $h^2-a^2g^2=2bch>0$. Hence $0<ag/h<1$, and
\[
 \frac{ag}{h}-\left(\frac{ag}{h}\right)^3
 =\frac{2abcg}{h^2}
 =N\left(\frac{2s}{h}\right)^2.
\]
The other two points are treated cyclically.

To check the last equation in \eqref{eq:compatibility}, consider the triangle with side lengths $af$, $be$, and $cd$. These are the face diagonals of the box with edges $ab$, $ac$, and $bc$, and they satisfy the strict triangle inequalities. For example,
\[
 (be)^2+(cd)^2-(af)^2=2b^2c^2>0,
\]
and cyclically, so this triangle is acute. If $\alpha_1$ is the angle opposite $af$, the cosine rule gives
\[
 \cos\alpha_1=\frac{bc}{de},
 \qquad \sin\alpha_1=\frac{ag}{de}.
\]
Thus $t_1=\tan(\alpha_1/2)$, and cyclically $t_i=\tan(\alpha_i/2)$. Since $\alpha_1+\alpha_2+\alpha_3=\pi$, the cosine addition formula for the three half-angles yields
\[
 0=\prod_{i=1}^3\cos(\alpha_i/2)
       \bigl(1-t_1t_2-t_1t_3-t_2t_3\bigr).
\]
All three cosine factors are positive. This proves the required identity. The same triangle has area $abcg/2$, which also explains the normalization of $N$.
\end{proof}

\begin{corollary}\label{cor:cuboid-rank}
Every positive rational perfect cuboid determines a squarefree integer
\[
 N=\operatorname{sf}(abcg/2)
\]
for which $\operatorname{rank} E_N(\mathbb{Q})\ge2$. More precisely, the three points in \eqref{eq:cuboid-points} generate a subgroup of rank at least two.
\end{corollary}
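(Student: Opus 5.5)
The corollary should follow by combining Proposition~\ref{prop:cuboid-map} with Theorem~\ref{thm:main}, after clearing denominators and checking that $N$ is well defined.

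\textbf{Step 1: pass to positive rationals.} Start from a positive rational perfect cuboid, that is, positive rationals $a,b,c$ whose face diagonals $d,e,f$ and space diagonal $g$ are also positive rationals. An integer perfect cuboid is in particular of this form, and conversely, as noted in the introduction. Then the seven numbers satisfy \eqref{eq:cuboid} with all entries in $\mathbb{Q}_{>0}$. Hence $abcg/2$ is a positive rational number, and $N=\operatorname{sf}(abcg/2)$ is a well-defined positive squarefree integer. Because $abcg/2$ and $N$ represent the same class in $\mathbb{Q}^{\times}/\mathbb{Q}^{\times2}$ and both are positive, there is an $s\in\mathbb{Q}_{>0}$ with $abcg/2=Ns^2$. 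These are exactly the hypotheses of Proposition~\ref{prop:cuboid-map}.

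\textbf{Step 2: apply the proposition and the theorem.} Proposition~\ref{prop:cuboid-map} shows that the points $Q_1,Q_2,Q_3$ in \eqref{eq:cuboid-points} lie on $D_N$ and form a positive cuboid configuration in the sense of Definition~\ref{def:configuration}. Theorem~\ref{thm:main} then says that $H=\langle Q_1,Q_2,Q_3\rangle$ has rank at least two. This is the "more precisely" clause of the corollary.

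\textbf{Step 3: transfer to $E_N$.} The map $\iota$ in \eqref{eq:isomorphism} is a rational isomorphism of elliptic curves sending $O$ to $O$, so it is a group isomorphism $D_N(\mathbb{Q})\simeq E_N(\mathbb{Q})$. Therefore
\[
\operatorname{rank}E_N(\mathbb{Q})=\operatorname{rank}D_N(\mathbb{Q})\ge\operatorname{rank}H\ge2.
\]
The inequality $\operatorname{rank}D_N(\mathbb{Q})\ge\operatorname{rank}H$ holds because rank, defined via $\otimes_{\mathbb{Z}}\mathbb{Q}$, is monotone on subgroups, since $\mathbb{Q}$ is flat over $\mathbb{Z}$.

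Nothing in this argument is genuinely hard, since all the depth is in Theorem~\ref{thm:main}. The only care needed is to confirm that no normalization ambiguity arises: the choice of $s$ only changes the sign of $\eta_i$, which Definition~\ref{def:configuration} allows, and $N$ does not depend on any choice.
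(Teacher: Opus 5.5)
Your proposal is correct and is essentially the paper's argument, which simply cites Proposition~\ref{prop:cuboid-map} and Theorem~\ref{thm:main}; you add the base-point-preserving isomorphism $\iota$ to pass from $\operatorname{rank}H\ge2$ to $\operatorname{rank}E_N(\mathbb{Q})\ge2$. Your routine checks (the existence of $s>0$, and monotonicity of rank under inclusion) are right; note only that the requirement $s>0$ determines $s$ uniquely, so no sign choice actually arises.
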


This follows from Proposition~\ref{prop:cuboid-map} and Theorem~\ref{thm:main}. The converse correspondence between configurations and cuboids is given in Paulsen--West~\cite[Proposition~3.1]{PW}; the rank obstruction proved here needs only the direction established above.

\begin{remark}\label{rem:rank-zero}
Rank zero is excluded before any of the rank-one argument is used. Indeed, every point in a positive configuration has $0<t<1$ and is therefore non-torsion by \eqref{eq:torsion}. A rank-zero congruent-number curve has no such point.
\end{remark}


\section{The gap in the sequence interchange}\label{sec:gap}

For a non-torsion point $G\in D_N(\mathbb{Q})$ with $0<t(G)<1$, define point sequences
\begin{equation}\label{eq:sequences}
 \mathcal A_k(G)=[k]G+\epsilon(k)S,
 \qquad \mathcal B_k(G)=\mathcal A_k(G)+T,
 \qquad
 \epsilon(k)=\begin{cases}0,&k\text{ odd},\\1,&k\text{ even}.
 \end{cases}
\end{equation}
Write $A_k(G)=t(\mathcal A_k(G))$ and $B_k(G)=t(\mathcal B_k(G))$ for their abscissas. These are the two scalar sequences denoted by $A$ and $B$ in Paulsen--West.

The torsion translations are
\begin{equation}\label{eq:translations}
 \begin{aligned}
 (t,\eta)+S&=\left(-\frac1t,\frac{\eta}{t^2}\right),\\
 (t,\eta)+T&=\left(\frac{1-t}{1+t},-\frac{2\eta}{(1+t)^2}\right).
 \end{aligned}
\end{equation}
In particular, translation by $T$ preserves the interval $(0,1)$ for abscissas.

In the proof of Theorem~4.2, Paulsen and West~\cite{PW} replace $G$ by $G+T$ and assert that ``the sequences $A$ and $B$ would trade places.'' This is used to assume that an odd number of the three entries belong to $B$. The exact transformation is instead the following.

\begin{proposition}\label{prop:actual-swap}
Under the replacement $G\mapsto G+T$, the entries satisfy
\begin{equation}\label{eq:actual-swap}
 \begin{aligned}
 (A_k(G+T),B_k(G+T))
 &=\begin{cases}
 (B_k(G),A_k(G)),&k\text{ odd},\\
 (A_k(G),B_k(G)),&k\text{ even}.
 \end{cases}
 \end{aligned}
\end{equation}
\end{proposition}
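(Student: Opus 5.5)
Since $T$ has order two, $[k](G+T)=[k]G+[k]T$, and $[k]T$ equals $T$ for $k$ odd and $O$ for $k$ even. The coefficient $\epsilon(k)$ depends only on the parity of $k$, not on $G$. So the proof is a direct group-law computation in the torsion subgroup $\{O,S,T,U\}$, followed by reading off abscissas.

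For $k$ odd, $\epsilon(k)=0$. Then
\[
 \mathcal A_k(G+T)=[k]G+T=\mathcal B_k(G),
\]
and
\[
 \mathcal B_k(G+T)=[k]G+T+T=[k]G=\mathcal A_k(G),
\]
using $2T=O$. Taking abscissas gives $(A_k(G+T),B_k(G+T))=(B_k(G),A_k(G))$.

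For $k$ even, $[k]T=O$, so $[k](G+T)=[k]G$. Hence
\[
 \mathcal A_k(G+T)=[k]G+S=\mathcal A_k(G),
 \qquad
 \mathcal B_k(G+T)=\mathcal A_k(G)+T=\mathcal B_k(G).
\]
These are equalities of points, so the abscissas agree as well, giving the second line of \eqref{eq:actual-swap}.

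Before starting, I would check that the sequences are defined for $G+T$. The definition \eqref{eq:sequences} requires $G+T$ to be non-torsion with $0<t(G+T)<1$. Non-torsion holds because $G$ is non-torsion and $T$ is torsion. The bound on the abscissa holds by the remark after \eqref{eq:translations}, since translation by $T$ preserves $(0,1)$.

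There is no real obstacle here; the only step needing care is using $[k]T=O$ for even $k$, which is exactly why the interchange fails at even indices. As a sanity check against the intended use, I would note that the explicit formula \eqref{eq:translations} is not needed except for that well-definedness remark.
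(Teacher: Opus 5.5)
Your proof is correct and follows the paper's argument: use $[2]T=O$ to get $[k](G+T)=[k]G+[k]T$ with $[k]T\in\{T,O\}$ according to the parity of $k$, substitute into \eqref{eq:sequences}, and pass from points to abscissas. Your extra check that $G+T$ is again non-torsion with abscissa in $(0,1)$, so that its sequences are defined, is a harmless addition; the paper leaves it implicit in the remark after \eqref{eq:translations}.
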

\begin{proof}
Since $[2]T=O$,
\[
 [k](G+T)=[k]G+[k]T
 =\begin{cases}[k]G+T,&k\text{ odd},\\ {}[k]G,&k\text{ even}.
 \end{cases}
\]
Substitution in \eqref{eq:sequences} proves the claim, first for the points and then for their abscissas.
\end{proof}

In fact, $A_k(G)$ and $B_k(G)$ are always distinct: equality would imply $A_k(G)^2+2A_k(G)-1=0$, which has no rational root. Thus the failure at even indices occurs for every rational non-torsion base point, including any choice of generator in the rank-one setting.

\subsection{Two explicit counterexamples}

On $D_6$, take
\[
 G=\left(\frac12,\frac14\right),
 \qquad G+T=\left(\frac13,-\frac29\right).
\]
Both points satisfy $6\eta^2=t-t^3$. Doubling gives
\[
 [2]G=\left(-\frac{25}{24},-\frac{35}{288}\right)
       =[2](G+T).
\]
Using \eqref{eq:translations}, for \emph{both} choices of base point one obtains
\begin{equation}\label{eq:example-six}
 A_2=\frac{24}{25},\qquad B_2=\frac1{49}.
\end{equation}
The two values are distinct and remain in their respective sequences.

A second example is provided by $D_5$:
\[
 G=\left(\frac45,\frac6{25}\right),
 \qquad G+T=\left(\frac19,-\frac4{27}\right).
\]
Here
\[
 t([2]G)=t([2](G+T))=-\frac{1681}{720},
\]
and the corresponding unchanged second entries are
\begin{equation}\label{eq:example-five}
 A_2=\frac{720}{1681},\qquad B_2=\frac{961}{2401}.
\end{equation}
These computations use exact rational arithmetic; they can also be checked directly from
\begin{equation}\label{eq:duplication-t}
 t([2]G)=-\frac{(1+t(G)^2)^2}{4N\eta(G)^2}.
\end{equation}

\subsection{What the counterexamples do and do not establish}

The examples contradict the asserted interchange of the entire sequences. They are not counterexamples to the rank statement and are not claimed to be cuboid configurations. Their role is to identify the precise invalid reduction in the proof.

If a triple has $b$ entries from $B$ and $o$ odd indices, then changing $G$ to $G+T$ changes the parity of $b$ by $o$ modulo two. Consequently, when $o$ is even, a triple with an even number of $B$ entries remains of that kind. In particular, a pattern consisting of an odd-indexed $A$ entry, an odd-indexed $B$ entry, and an even-indexed $B$ entry is not reduced to the odd-$B$ case by this operation. Section~\ref{sec:reduction} proves that this is precisely the pattern still requiring an argument after the valid polynomial exclusions have been applied.


\section{A geometric restriction on positive triples}\label{sec:positivity}

The following lemma is the geometric ingredient needed to exclude that last pattern. We use the standard identification of an elliptic curve with its degree-zero divisor class group; see Silverman~\cite[Chapter~III]{Silverman}.

\begin{lemma}\label{lem:positivity}
Let $Q_1,Q_2,Q_3$ be a positive cuboid configuration on $D_N$. For every choice of signs $\sigma_i\in\{1,-1\}$,
\begin{equation}\label{eq:forbidden-sum}
 \sigma_1Q_1+\sigma_2Q_2+\sigma_3Q_3\ne S.
\end{equation}
\end{lemma}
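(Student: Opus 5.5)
The plan is to turn the relation \eqref{eq:forbidden-sum} into a statement about the zeros of a single function in a Riemann--Roch space, and then read off a contradiction from Vieta's formulas. Put $P_i=\sigma_iQ_i$. Negation on $D_N$ is $(t,\eta)\mapsto(t,-\eta)$, so $t(P_i)=t_i$ and the signs play no further role. Suppose $P_1+P_2+P_3=S$. Since $[2]S=O$, this is $P_1+P_2+P_3+S=O$. By the identification of $D_N$ with its degree-zero class group (Silverman, Ch.~III), the divisor $(P_1)+(P_2)+(P_3)+(S)-4(O)$ is then principal. So it is the divisor of some $f\in L(4O)=\langle 1,t,t^2,\eta\rangle$, where $t$ has a pole of order $2$ and $\eta$ a pole of order $3$ at $O$. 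Because $f(S)=0$ and $S=(0,0)$, the constant term vanishes, so
\[
f=\alpha\eta+\beta t+\gamma t^2 .
\]

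\textbf{Degenerate case $\alpha=0$.} Here $f=t(\beta+\gamma t)$. The factor $t$ has divisor $2(S)-2(O)$, and $\beta+\gamma t$ vanishes on a pair $\{R,-R\}$ or has a double zero at a $2$-torsion point. In either case the multiset $\{P_1,P_2,P_3,S\}$ must contain $S$ at least twice. That would force some $P_i=S$, which is torsion. This contradicts Remark~\ref{rem:rank-zero}, since $0<t_i<1$ makes every $P_i$ non-torsion.

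\textbf{Main case $\alpha\neq0$.} Normalize to $\eta=-(b+ct)t$ with $b=\beta/\alpha$ and $c=\gamma/\alpha$. Substituting into $N\eta^2=t-t^3$ and removing the factor $t$ (which accounts for the zero at $S$) gives
\[
Nc^2t^3+(2Nbc+1)t^2+Nb^2t-1=0 .
\]
Its roots, with multiplicity, are exactly the abscissas $t_1,t_2,t_3$ of the remaining zeros of $f$. This is the step needing the most care. I would check that zeros of $f$ away from $O$ and $S$ correspond bijectively, with multiplicities, to the roots of this cubic, since $\eta$ is determined by $t$ on the curve $\eta=-(b+ct)t$. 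I would also check that $c\neq0$, since otherwise the degree drops and the sum of valuations could not equal $4$. Vieta's formulas then give
\[
t_1t_2+t_1t_3+t_2t_3=\frac{b^2}{c^2},\qquad t_1t_2t_3=\frac{1}{Nc^2},\qquad t_1+t_2+t_3=-\frac{2Nbc+1}{Nc^2}.
\]

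The compatibility condition \eqref{eq:compatibility} forces $b^2=c^2$, so $b=\pm c$, and we split on the sign.
\begin{itemize}
\item If $b=c$, then $t_1+t_2+t_3=-2-\tfrac{1}{Nc^2}<0$. This is impossible because every $t_i>0$.
\item If $b=-c$, the cubic becomes $Nc^2t^3+(1-2Nc^2)t^2+Nc^2t-1$. Evaluating at $t=1$ gives $Nc^2+1-2Nc^2+Nc^2-1=0$, so $t=1$ is one of the roots. Every root, however, is some $t_i$ with $t_i<1$, which is a contradiction.
\end{itemize}
Both signs are excluded, which proves \eqref{eq:forbidden-sum}. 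The only delicate point I expect is the bookkeeping of multiplicities and of the degenerate cases, namely $\alpha=0$, $c=0$, or some $P_i$ coinciding with another $P_j$ or with $S$. The Vieta computation itself is routine.
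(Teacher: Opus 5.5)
Your proposal is correct and follows essentially the same route as the paper. Both arguments use a function in $L(4O)$ with divisor $(S)+\sum(Q_i)-4(O)$, rule out vanishing $\eta$ and $t^2$ coefficients, arrive at the same cubic, and apply Vieta with $s_2=1$ to force $b=\pm c$; your observation that $t=1$ is a root when $b=-c$ is equivalent to the paper's identity $\prod(1-t_i)=1-s_1+s_2-s_3=0$.
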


\begin{proof}
Negating a point changes only its ordinate. Thus it suffices to rule out $Q_1+Q_2+Q_3=S$ for arbitrary choices of the ordinates.

Suppose that this equality holds. Since $S=-S$, the divisor
\[
 (S)+(Q_1)+(Q_2)+(Q_3)-4(O)
\]
is principal. There is therefore a rational function with this divisor. The pole orders of $t$ and $\eta$ at $O$ are two and three, respectively, and the Riemann--Roch space is
\[
 L(4O)=\langle 1,t,\eta,t^2\rangle_{\mathbb{Q}}.
\]
The function vanishes at $S$, so its constant term is zero. Its $\eta$ coefficient cannot vanish: a polynomial in $t$ which vanishes at $S$ has even vanishing order there, whereas the prescribed multiplicity of $S$ is one. Its $t^2$ coefficient cannot vanish either, since its pole has exact order four. After scaling, the function is consequently
\begin{equation}\label{eq:parabola}
 \eta-\alpha t^2-\beta t,
 \qquad \alpha,\beta\in\mathbb{Q},\quad\alpha\ne0.
\end{equation}

Substitution of $\eta=\alpha t^2+\beta t$ in the curve equation, followed by removal of the root $t=0$ corresponding to $S$, gives
\begin{equation}\label{eq:cubic-positivity}
 N\alpha^2t^3+(2N\alpha\beta+1)t^2+N\beta^2t-1=0.
\end{equation}
Its roots, counted with multiplicity, are $t_1,t_2,t_3$. Write $s_1,s_2,s_3$ for their elementary symmetric functions. Vieta's formulas give
\begin{equation}\label{eq:vieta}
 s_1=-\frac{2N\alpha\beta+1}{N\alpha^2},
 \qquad s_2=\frac{\beta^2}{\alpha^2},
 \qquad s_3=\frac1{N\alpha^2}.
\end{equation}
By compatibility, $s_2=1$, so $\beta=\alpha$ or $\beta=-\alpha$.

If $\beta=\alpha$, then $s_1=-2-1/(N\alpha^2)<0$, contrary to $t_i>0$. If $\beta=-\alpha$, then $s_1=2-1/(N\alpha^2)$, and
\[
 \prod_{i=1}^3(1-t_i)=1-s_1+s_2-s_3=0.
\]
This contradicts $t_i<1$. The divisor argument includes repeated points through their multiplicities, so no distinctness assumption on the original triple is needed.
\end{proof}


\section{Reduction of a rank-one configuration}\label{sec:reduction}

Assume for a contradiction that the subgroup $H$ in Theorem~\ref{thm:main} has rank at most one. Every $Q_i$ is non-torsion, so $H$ has rank exactly one. In the free abelian group
\[
 D_N(\mathbb{Q})/D_N(\mathbb{Q})_{\rm tors},
\]
the saturation of the image of $H$ is an infinite cyclic group. Choose a lift $G\in D_N(\mathbb{Q})$ of a generator of that saturation. There are nonzero integers $k_i$ and rational two-torsion points $R_i$ such that $Q_i=[k_i]G+R_i$. Negating individual $Q_i$, if necessary, makes all $k_i$ positive without changing \eqref{eq:compatibility}.

The real locus of $D_N$ has two components: its identity component has $t\le-1$ together with $O$, and the other has $0\le t\le1$. Translation by $S$ interchanges them. We may therefore translate $G$ by rational two-torsion to arrange
\begin{equation}\label{eq:base}
 z=t(G)\in\mathbb{Q},\qquad 0<z<1.
\end{equation}
This changes only the torsion terms $R_i$. An odd multiple of $G$ has $0<t<1$, and an even multiple has $t<-1$; none is torsion. Hence, for each positive index $k$, exactly the two translates $\mathcal A_k(G)$ and $\mathcal B_k(G)$ in \eqref{eq:sequences} have abscissa in $(0,1)$. The three points under consideration can thus be taken from these point sequences.

The standard division polynomials give
\begin{equation}\label{eq:multiplication-polynomials}
 t([k]G)=\frac{P_k(z)}{Q_k(z)},\qquad P_k,Q_k\in\mathbb{Z}[z],
\end{equation}
with
\begin{equation}\label{eq:degrees}
 \deg P_k=k^2,\quad\operatorname{lc}(P_k)=1,
 \qquad\deg Q_k=k^2-1,\quad\operatorname{lc}(Q_k)=k^2.
\end{equation}
For clarity, these polynomials are independent of $N$. Over a quadratic extension, $D_N$ becomes $v^2=t^3-t$ by changing the ordinate and keeping $t$ fixed. Multiplication commutes with this isomorphism. For $v^2=t^3-t$, the usual formula is
\[
 t([k]P)=\frac{t\psi_k^2-\psi_{k-1}\psi_{k+1}}{\psi_k^2}.
\]
After eliminating $v^2=t^3-t$, both numerator and denominator belong to $\mathbb{Z}[t]$; the division-polynomial recursion gives the leading coefficients in \eqref{eq:degrees}. This is the standard multiplication-polynomial calculation underlying Paulsen--West Lemma~4.1; see also Silverman~\cite[Chapter~III]{Silverman}. For instance,
\[
 P_1=z,\quad Q_1=1,\qquad
 P_2=(z^2+1)^2,\quad Q_2=4z(z^2-1).
\]

Combining \eqref{eq:multiplication-polynomials} with \eqref{eq:translations} yields
\begin{equation}\label{eq:AB-table}
 \begin{array}{c|cc}
 &k\ \text{odd}&k\ \text{even}\\[2pt]\hline
 A_k& P_k/Q_k&-Q_k/P_k\\[3pt]
 B_k&(Q_k-P_k)/(Q_k+P_k)&(P_k+Q_k)/(P_k-Q_k).
 \end{array}
\end{equation}
None of the displayed denominators vanishes at $z=t(G)$, since such a zero would make a multiple of $G$ a torsion point or $O$.

\begin{lemma}\label{lem:oddB}
A compatible triple selected from these sequences cannot have an odd number of entries from $B$.
\end{lemma}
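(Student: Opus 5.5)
The plan is to turn the compatibility relation into an integral identity in the numerator and denominator of $z=t(G)$, and then obtain a parity contradiction. Write $z=p/q$ with $\gcd(p,q)=1$ and $q>0$. For each index $k$, homogenize the multiplication polynomials:
\[
\widetilde P_k(p,q)=q^{k^2}P_k(p/q),\qquad \widetilde Q_k(p,q)=q^{k^2}Q_k(p/q).
\]
These are integers, and by \eqref{eq:degrees} the form $\widetilde Q_k$ is divisible by $q$, with $\widetilde P_k\equiv p^{k^2}\pmod q$. Using the table \eqref{eq:AB-table}, each entry $t_i$ of the triple is written as a quotient $n_i/d_i$ of integers built from $\widetilde P_{k_i}$ and $\widetilde Q_{k_i}$. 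The relation $t_1t_2+t_1t_3+t_2t_3=1$ then becomes
\[
n_1n_2d_3+n_1n_3d_2+n_2n_3d_1=d_1d_2d_3 .
\]
The aim is to show that this equality is impossible modulo a suitable power of $2$ whenever an odd number of the entries are $B$-entries.

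The key step is to determine, case by case, the $2$-adic valuations of $n_i$ and $d_i$. I would first pin down the reductions of $\widetilde P_k$ and $\widetilde Q_k$ modulo $2$ (or modulo $4$). The starting points are the facts that $P_k$ is monic of degree $k^2$ and that $Q_k$ has leading coefficient $k^2$, which is odd for $k$ odd and even for $k$ even. The cleanest way to get the full reductions is to look at $v^2=t^3-t$ modulo $2$: there $P_2=(z^2+1)^2$ and $Q_2=4z(z^2-1)$, so doubling makes the denominator highly divisible by $2$, and an induction through the division-polynomial recursion should give $Q_k\equiv 0\pmod 2$ for even $k$ and $P_k\equiv z^{k^2}$, $Q_k\equiv z^{k^2-1}$ (up to a unit) for odd $k$. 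Combined with the parity of $p$ and $q$, this yields the following pattern.

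\begin{itemize}
\item An $A$-entry has $n_i,d_i$ of \emph{unequal} $2$-adic behaviour: one of them is even, or carries extra powers of $2$.
\item A $B$-entry $(Q\mp P)/(Q\pm P)$ has $n_i$ and $d_i$ of the same parity. After removing common factors its reduced form has, in one of the parity cases for $(p,q)$, both numerator and denominator odd, and otherwise it has a controlled power of $2$.
\end{itemize}

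The same information is visible from the $2$-adic absolute value: $|t|_2$ is $\ne1$ for $A$-entries and $|t|_2=1$ for $B$-entries, or the reverse, depending on the parity class of $z$.

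With these valuations in hand, I would compare the $2$-adic valuations of the four terms in the cleared identity. When an odd number of entries have $|t_i|_2=1$ and the others are strictly smaller, or strictly larger, in $2$-adic size, one term of $t_1t_2+t_1t_3+t_2t_3-1$ strictly dominates the rest $2$-adically. The sum is then nonzero, which contradicts compatibility. Concretely, I would check the two patterns with one $B$-entry and three $B$-entries separately:

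\begin{itemize}
\item With three $B$-entries, all four terms are $2$-adic units. Here I would refine to modulo $4$ or $8$ and use $t_B\equiv\pm1$ together with the congruences for $(Q-P)/(Q+P)$, showing that $t_1t_2+t_1t_3+t_2t_3\equiv 3\not\equiv1$ in the appropriate sense.
\item With one $B$-entry, the two $A$-entries force a single dominant term.
\end{itemize}

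The main obstacle is the first step: establishing uniform $2$-adic information about $P_k$ and $Q_k$ for all $k$ of each parity, rather than only for small $k$. I would try first to avoid division polynomials entirely and argue on the $2$-adic points $D_N(\mathbb Q_2)$ through the formal group filtration. Translation by $T$ and by $S$ has the explicit effect \eqref{eq:translations} on $|t|_2$, and multiplication by odd or even $k$ respects the filtration. This should give the dichotomy $|A_k|_2\ne1=|B_k|_2$ directly, with the sign and residue refinement modulo $4$ coming from the formal logarithm. If that route stalls, I would fall back on the congruence induction via the recursion for $\psi_k$.
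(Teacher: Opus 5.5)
Your proposal has a genuine gap, and it sits exactly where this paper locates the Paulsen--West error. You rely on the dichotomy that $|t|_2\ne1$ for all $A$-entries and $|t|_2=1$ for all $B$-entries, ``or the reverse, depending on the parity class of $z$''. This is false when $z$ is a $2$-adic unit, i.e.\ when $p$ and $q$ are both odd.

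For $z=1/3$ (the base point $G+T$ on $D_6$ in Section~\ref{sec:gap}), $A_1=1/3$ and $B_2=1/49$ are $2$-adic units, but $B_1=1/2$ and $A_2=24/25$ are not. In general $\{P:|t(P)|_2\ne1\}\cup\{O\}$ is an index-two subgroup of $D_N(\mathbb{Q}_2)$ containing $S$ but not $T$. So for a $2$-adic unit $z$, the unit entries are the odd-index $A$-entries and the even-index $B$-entries.

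Your local argument then breaks down. For $(A_{k_1},A_{k_2},B_{k_3})$ with all $k_i$ odd, the terms $t_1t_2$ and $1$ are both units, and nothing dominates on the basis of unit/non-unit information. For three odd-index $B$-entries all $t_i$ are non-units, so ``all four terms are $2$-adic units'' and your mod-$4$ computation do not apply.

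Two further steps fail as well:
\begin{itemize}
\item The conjectured reductions $P_k\equiv z^{k^2}$, $Q_k\equiv z^{k^2-1}\pmod 2$ are wrong. Already $P_3=z(z^4+6z^2-3)^2\equiv z(z+1)^8$ and $Q_3=(3z^4-6z^2-1)^2\equiv(z+1)^8$. Moreover, for $p,q$ odd both homogenized forms are even, so reduction mod $2$ gives no information.
\item You cannot pass to a $2$-adic non-unit base point via $G\mapsto G+T$. By Proposition~\ref{prop:actual-swap}, this flips the parity of the number of $B$-entries whenever the number of odd indices is odd, which is precisely the situation in both examples above.
\end{itemize}

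The repair is to work at a prime $\ell$ dividing $q$ instead of at $2$; such an $\ell$ exists because $0<z<1$. Your own observations $q\mid\widetilde Q_k$ and $\widetilde P_k\equiv p^{k^2}\pmod q$ give $|t([k]G)|_\ell\ge|z|_\ell>1$ for every $k$. By \eqref{eq:AB-table}, every $A$-entry is then an $\ell$-adic non-unit, and every $B$-entry is $\equiv\pm1$ modulo $\ell$ (modulo $4$ if $\ell=2$), whatever the parity of its index.

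Your dominance argument then settles the one-$B$ case. In the three-$B$ case, $\epsilon_1\epsilon_2+\epsilon_1\epsilon_3+\epsilon_2\epsilon_3\in\{3,-1\}$, and neither value is $\equiv1$ modulo an odd $\ell$ or modulo $4$.

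This is the paper's proof in local form. It shows that the cleared polynomial $Z$ in \eqref{eq:cleared} has leading coefficient $\pm1$ (one $B$-entry). For three $B$-entries it uses $n_i\equiv d_i\pmod 2$ to get $Z\equiv0\pmod 2$ with $Z/2$ of leading coefficient $\pm1$. In both cases the rational-root theorem forces $z\in\mathbb{Z}$, contradicting $0<z<1$.
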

\begin{proof}
Write its abscissas as $n_i(z)/d_i(z)$, using the expressions in \eqref{eq:AB-table}. Clearing denominators in \eqref{eq:compatibility} gives $Z(z)=0$, where
\begin{equation}\label{eq:cleared}
 Z=n_1n_2d_3+n_1n_3d_2+n_2n_3d_1-d_1d_2d_3
 \in\mathbb{Z}[z].
\end{equation}

Suppose first that exactly one entry, say the third, is from $B$. For each of the two $A$ entries, one of the numerator and denominator has degree one greater than the other and leading coefficient $1$ or $-1$. Both $B$ factors have the same degree and leading coefficient $1$ or $-1$. In the four summands of \eqref{eq:cleared}, each of the four possible choices of numerator or denominator for the first two positions occurs exactly once. Thus exactly one summand uses both larger-degree $A$ factors. It has strictly largest degree and leading coefficient $1$ or $-1$. Consequently $Z$ has leading coefficient $1$ or $-1$.

If all three entries are from $B$, multiply each numerator-denominator pair by a sign so that its numerator is monic. Let $\delta_i\in\{1,-1\}$ be the denominator leading coefficients. All four summands in \eqref{eq:cleared} have the same degree, with combined leading coefficient
\[
 \delta_1+\delta_2+\delta_3-\delta_1\delta_2\delta_3\in\{2,-2\}.
\]
Also $n_i\equiv d_i\pmod2$ as polynomials. Reducing \eqref{eq:cleared} modulo two therefore gives $Z\equiv0$. It follows that $Z/2\in\mathbb{Z}[z]$ has leading coefficient $1$ or $-1$.

In both cases, the rational-root theorem forces the rational number $z$ to be an integer. This contradicts \eqref{eq:base}.
\end{proof}

This is the part of the polynomial argument of Paulsen--West that remains valid. What is needed next is a reduction which respects Proposition~\ref{prop:actual-swap}.

\begin{lemma}\label{lem:residual}
If a positive cuboid configuration generates a subgroup of rank one, then, after the allowed changes of signs, base point, and labels, it has the form
\begin{equation}\label{eq:residual}
 \begin{aligned}
 Q_1&=[a]G,\\
 Q_2&=[b]G+T,\\
 Q_3&=[c]G+S+T,
 \end{aligned}
 \qquad a,b\text{ positive odd},\quad c\text{ positive even},
\end{equation}
where $G$ is non-torsion and $0<t(G)<1$.
\end{lemma}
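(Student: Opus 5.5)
The plan is to enumerate every admissible pattern of the triple and eliminate all but one. Each pattern records, for each entry, which sequence ($A$ or $B$) it comes from and the parity of its index. The tools are Lemma~\ref{lem:oddB}, the exact swap rule of Proposition~\ref{prop:actual-swap}, a leading-coefficient argument of the same type as in Lemma~\ref{lem:oddB}, and one further change of base point to handle indices that are all even.

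By the discussion preceding \eqref{eq:multiplication-polynomials}, we may assume $0<t(G)<1$, all $k_i>0$, and each $Q_i\in\{\mathcal A_{k_i}(G),\mathcal B_{k_i}(G)\}$. Let $b$ be the number of $B$ entries and $o$ the number of odd indices.

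\textbf{Step 1 (all indices even).} Suppose $o=0$. Put $G'=[2]G+S$. Since $t([2]G)<-1$, translation by $S$ gives $0<t(G')<1$, and $G'$ is non-torsion. For even $k=2m$ we have $[k]G=[m]G'+[m]S$. Hence each $Q_i$ equals $[k_i/2]G'$ plus a rational two-torsion point. Because $t(Q_i)\in(0,1)$, each $Q_i$ is again $\mathcal A_{k_i/2}(G')$ or $\mathcal B_{k_i/2}(G')$. The conclusion of the lemma only requires $G$ to be non-torsion with $0<t(G)<1$; it does not require $G$ to generate the saturation. So we may replace $G$ by $G'$ and halve all indices. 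Since the indices are positive integers, after finitely many halvings at least one index is odd. From now on we assume $o\ge1$.

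\textbf{Step 2 (odd $o$).} Replacing $G$ by $G+T$ changes $b$ by $o$ modulo two, and the map $G\mapsto G+T$ preserves $0<t(G)<1$. If $o$ is odd, then either $b$ or the new value of $b$ is odd, and Lemma~\ref{lem:oddB} excludes that case. Hence $o=2$, and by Lemma~\ref{lem:oddB} we have $b\in\{0,2\}$.

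\textbf{Step 3 ($o=2$, $b=0$).} Label the entries so that $k_1,k_2$ are odd and $k_3$ is even. By \eqref{eq:AB-table}, the first two entries are $P/Q$ with monic numerator of degree one more than the denominator. The third is $-Q_{k_3}/P_{k_3}$, whose denominator is monic of degree one more than its numerator. In \eqref{eq:cleared}, the summand $n_1n_2d_3$ uses the larger-degree factor in every position. It is therefore the unique summand of top degree, with leading coefficient $1$. So $Z$ has leading coefficient $\pm1$, the rational-root theorem forces $z\in\mathbb{Z}$, and this contradicts \eqref{eq:base}.

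\textbf{Step 4 ($o=2$, $b=2$).} If both odd-index entries are $B$ and the even-index entry is $A$, replace $G$ by $G+T$. By Proposition~\ref{prop:actual-swap}, this turns the odd-index entries into $A$ entries and leaves the even-index entry unchanged. The result is the case of Step 3, which is excluded. The only remaining pattern is one odd-index $A$ entry, one odd-index $B$ entry, and one even-index $B$ entry. Relabel them as $Q_1,Q_2,Q_3$ and read off the points from \eqref{eq:sequences}: $\mathcal A_a=[a]G$, $\mathcal B_b=[b]G+T$, and $\mathcal B_c=[c]G+S+T$. This is exactly \eqref{eq:residual}.

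The main obstacle is Step 1. With all indices even, the degree count of Step 3 does not isolate a unique top-degree summand, and the parity swap does nothing. The step works because the conclusion of the lemma lets us re-base at $[2]G+S$, which stays in the component $0<t<1$ and halves every index.
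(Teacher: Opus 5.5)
Your proof is correct and uses the same ingredients as the paper's proof: Lemma~\ref{lem:oddB} combined with the parity rule of Proposition~\ref{prop:actual-swap}, the leading-coefficient count that excludes two odd $A$ entries with an even $A$ entry, the re-basing $G'=[2]G+S$, and the $G\mapsto G+T$ swap that sends the two-odd-$B$ case back to the all-$A$ case. The only difference is order: you halve first, which makes the paper's separate exclusion of the all-even, all-$A$ pattern (unique top term $-d_1d_2d_3$) unnecessary. Your closing remark overstates the difficulty slightly, since the degree count does work in that all-$A$ case and breaks down only for the all-even pattern with two $B$ entries.
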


\begin{proof}
Let $b_0$ be the number of entries from $B$ and $o$ the number of odd indices. Lemma~\ref{lem:oddB} makes $b_0$ even. If $o$ were odd, changing $G$ to $G+T$ would make $b_0$ odd, again contradicting that lemma. Thus $o$ is also even.

Suppose $b_0=0$. If all three indices are even, the unique term of largest degree in \eqref{eq:cleared} is $-d_1d_2d_3$, with leading coefficient $-1$. If precisely two indices are odd, the unique term of largest degree is the one containing those two odd numerators and the even denominator; it has leading coefficient $1$. The rational-root theorem excludes both possibilities. Therefore $b_0=2$.

If all indices are even, replace the base point by
\[
 G'=[2]G+S.
\]
It is non-torsion and has abscissa in $(0,1)$. Its sequences satisfy the point identities
\begin{equation}\label{eq:halving-indices}
 \mathcal A_k(G')=\mathcal A_{2k}(G),
 \qquad \mathcal B_k(G')=\mathcal B_{2k}(G).
\end{equation}
Indeed, $[k]G'=[2k]G+[k]S$, and $k+\epsilon(k)$ is odd. Thus all three indices can be divided by two while preserving their sequence labels. Repeating this operation finitely many times yields at least one odd index. Reapplying the preceding exclusions then shows that there are exactly two odd indices and two $B$ entries.

If both odd indices were $B$ entries, the replacement $G\mapsto G+T$ would produce three $A$ entries, which have already been excluded. The odd entries must therefore be one $A$ and one $B$, while the even entry is $B$. This is \eqref{eq:residual}.
\end{proof}

No step requires the new base point to generate the whole Mordell--Weil group. In particular, the index-halving operation in \eqref{eq:halving-indices} is valid for the stronger subgroup formulation of Theorem~\ref{thm:main}.


\section{Primitive divisors and completion of the proof}\label{sec:completion}

\subsection{Removing the torsion translations}

Use \eqref{eq:isomorphism} and continue to denote the image of the base point by $G$. Set
\[
 X_k=X([k]G),\qquad X(G)=-Nt(G)<0.
\]
For the pattern \eqref{eq:residual}, put
\[
 u=-X_a/N,\qquad v=-X_b/N,\qquad w=-X_c/N.
\]
The three positive abscissas on $D_N$ are
\[
 t_1=u,\qquad t_2=\frac{1-v}{1+v},\qquad
 t_3=\frac{w+1}{w-1}.
\]
Direct expansion gives
\[
 t_1t_2+t_1t_3+t_2t_3-1
 =\frac{2(uv+uw-vw+1)}{(1+v)(w-1)}.
\]
The denominator is nonzero. Hence compatibility is equivalent to
\begin{equation}\label{eq:raw}
X_aX_b+X_aX_c-X_bX_c+N^2=0.
\end{equation}

If $a=b$, equation~\eqref{eq:raw} would give $X_a^2+N^2=0$, which is impossible. Since $c$ has opposite parity to $a,b$, the three indices are pairwise distinct. Their maximum $m$ is therefore strict and is at least three.

If $m=a$, replace $G$ on $D_N$ by $G+T$ and interchange the two odd roles. The old odd $B$ entry becomes the new odd $A$ entry, the old odd $A$ entry becomes the new odd $B$ entry, and the even entry is unchanged. The pattern \eqref{eq:residual} is preserved, as is $0<t(G)<1$, and the maximum now has the $b$ label. We may consequently assume
\begin{equation}\label{eq:maximum}
 m=b\quad\text{or}\quad m=c.
\end{equation}

\subsection{The primitive-divisor input}

Write the multiples in lowest terms as
\begin{equation}\label{eq:denominators}
 X([k]G)=\frac{M_k}{D_k^2},\qquad
 M_k\in\mathbb{Z},\quad D_k\in\mathbb{Z}_{>0},\quad\gcd(M_k,D_k)=1.
\end{equation}
A prime divisor of $D_m$ is \emph{primitive} if it divides none of $D_1,\ldots,D_{m-1}$. We use the following specialization of Ingram's result.

\begin{theorem}[Ingram, {\cite[Lemma~7]{Ingram}}]\label{thm:Ingram}
Let $N>0$ be squarefree and let $G\in E_N(\mathbb{Q})$ be a non-torsion point with $X(G)<0$. For every $m\ge3$, the denominator $D_m$ has a primitive prime divisor $p$ at which $E_N$ has good reduction. In particular, $p\nmid2N$.
\end{theorem}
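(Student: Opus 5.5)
The plan is to derive the statement from Ingram's general primitive-divisor analysis for the curves $E_N$, and to check separately that the hypotheses match and that the primitive prime avoids $2N$. First I will fix the arithmetic framework. The sequence $(D_k)$ in \eqref{eq:denominators} is an elliptic divisibility sequence in the divisibility sense. For each prime $p$, the set $\{k: p\mid D_k\}$ consists of the multiples of a rank of apparition $r_p$. Moreover, $v_p(D_{\ell r_p})=v_p(D_{r_p})+v_p(\ell)$ for $p$ odd of good reduction, with a bounded correction at $p=2$ and at primes of bad reduction.

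From this I obtain the standard upper bound when $D_m$ has no primitive prime:
\[
 \log D_m\le \sum_{q\mid m,\ q\text{ prime}}\log D_{m/q}+O(\log m).
\]
Every prime of $D_m$ then divides some $D_{m/q}$, and its extra valuation is at most $v_p(m)$, plus a bounded contribution from the primes dividing $2N$.

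Next I will compare this with the canonical height. Writing $\log D_k=k^2\hat h(G)-\lambda_\infty([k]G)+\sum_{p\mid 2N}(\text{bounded})$, the key use of the hypothesis $X(G)<0$ is as follows. $G$ lies on the non-identity real component $\{-N\le X\le0\}$, so every odd multiple stays on that compact oval. The archimedean local height at odd multiples is therefore bounded above and below by explicit constants, uniformly after scaling by $N$. Even multiples lie on the identity component, but $[2k]G=[2]([k]G)$ lets me control their archimedean term through the duplication formula, again uniformly.

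I also need an explicit lower bound $\hat h(G)\ge c\log N$ with an absolute $c>0$ for non-torsion $G\in E_N(\mathbb{Q})$. This is available for congruent-number curves, for instance via Bremner--Silverman--Tzanakis type estimates. Inserting these into the inequality gives $m^2\hat h(G)\le \sum_{q\mid m}(m/q)^2\hat h(G)+O(\log m+\log N)$. Since $\sum_q q^{-2}<1/2$, this is impossible once $m$ exceeds a small explicit bound. The finitely many remaining $m\ge3$ are then treated by Ingram's explicit computation; this is exactly the content of \cite[Lemma~7]{Ingram}, which I would quote at this point.

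Finally I will show that the primitive prime $p$ does not divide $2N$. For such $p$, the reduction analysis of $E_N$ shows that $[2]G$ or $[4]G$ already lies in the formal group whenever some multiple does. Concretely, the component groups at $p\mid N$ have exponent $2$, and at $p=2$ one checks directly from $X([2]G)=(X^2+N^2)^2/(4Y^2)$ that $2\mid D_2$ or $2\mid D_4$. Hence $r_p\le4$.

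For $m\ge5$ such a $p$ cannot be primitive. For $m=3$, and for $m=4$ where $r_p=4$, I must argue directly. Odd $m$ is not a multiple of $r_p\in\{2,4\}$ if $p$ divides no $D_1$. The case $m=4$ with $p\mid D_4$, $p\nmid D_2$ requires checking that $X([2]G)$ is $p$-integral with $[2]G$ landing in a component of order two. This small bad-prime bookkeeping at $m=3,4$ is the step I expect to be the main obstacle. If it fails I would fall back on Ingram's explicit list of exceptional cases, which excludes these indices for $X(G)<0$.
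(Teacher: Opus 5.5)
Your final step, which is meant to supply the good-reduction clause, contains a genuine error. You claim $r_p\le 4$ for every $p\mid 2N$, so that a primitive prime could never divide $2N$. This is harmless at $p=2$, but it is false at odd $p\mid N$.

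At such $p$, $E_N$ reduces to the cusp $Y^2\equiv X^3$. The exponent-two component group only gives $[2]G\in E_0(\mathbb{Q}_p)$. You have dropped the quotient $E_0(\mathbb{Q}_p)/E_1(\mathbb{Q}_p)\cong\tilde E_{\mathrm{ns}}(\mathbb{F}_p)\cong\mathbb{F}_p^{+}$, which has order $p$, so in fact $r_p\in\{1,2,p,2p\}$.

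The paper's own examples show this happens with $X(G)<0$:
\begin{itemize}
\item For $N=6$ and $G=(-2,8)$ (this is $\iota(G+T)$ from the $D_6$ example, up to sign), one finds $X([2]G)=25/4$ and $X([3]G)=-4418/33^2$. Hence $D_1=1$, $D_2=2$, $D_3=33$, and $3\mid N$ is a primitive prime divisor of $D_3$.
\item For $N=5$ and $G=(-4,6)=\iota(4/5,6/25)$, the point $G$ reduces to the smooth point $(1,1)$ of $Y^2=X^3$ over $\mathbb{F}_5$. So $5\mid D_k$ if and only if $5\mid k$, and $5$ is a primitive prime divisor of $D_5$.
\end{itemize}
Your $m=3$ argument and your claim for $m\ge5$ therefore both fail. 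The scheme ``obtain some primitive prime, then show it is good'' cannot work as stated.

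What must be proved is the existence of a \emph{good} primitive prime. That means running the counting inequality under the hypothesis that $D_m$ has no primitive prime of good reduction. The primes dividing $2N$, primitive or not, are then absorbed into the error term after their valuations in $D_m$ are bounded. This is exactly what the negative-abscissa case of Ingram's Lemma~7 provides. The paper does not reprove anything; it quotes that lemma for all $m\ge3$, good-reduction clause included.

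There is also a smaller flaw in your height step. You say duplication controls the archimedean term at even indices ``uniformly'', but that is not right. The egg contains the two-torsion points $(0,0)$ and $(-N,0)$, which double to $O$, so $\lambda_\infty([2k]G)$ is unbounded as $[k]G$ approaches them. An arithmetic input is needed, for instance that $M_k$ and $M_k\pm ND_k^2$ are nonzero integers. For $[k]G$ on the egg this gives $\tfrac12\log|X_{2k}|\le\log D_k+\log N+O(1)$, which costs about $\tfrac14 m^2\hat h(G)$ in your inequality. The argument survives because $\tfrac34>\sum_q q^{-2}$, but this has to be said. Since you defer the small indices to Lemma~7 anyway, and that lemma already contains the full statement, your reconstruction adds nothing that the citation, which is the paper's entire proof, does not already give.
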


This is the negative-abscissa conclusion of Lemma~7 in~\cite[pp.~617--618]{Ingram}; it applies to every positive squarefree $N$.

At a good prime, $p\mid D_k$ is equivalent to $[k]\overline G=O$ in $E_N(\mathbb{F}_p)$. Thus primitivity at $m$ means that the reduced point $\overline G$ has exact order $m$.

\subsection{The forced relation among the indices}

Apply Theorem~\ref{thm:Ingram} to the maximum $m$ in \eqref{eq:maximum}. Then
\[
 v_p(X_m)<0,
 \qquad v_p(X_k)\ge0\quad(1\le k<m),
\]
and $\overline G$ has order $m$.

If $m=b$, rewrite \eqref{eq:raw} as
\[
 X_b(X_a-X_c)=-(X_aX_c+N^2).
\]
The right side is $p$-integral, whereas $X_b$ has negative valuation. It follows that $X_a-X_c$ has positive valuation, and hence
\[
 X_a\equiv X_c\pmod p.
\]
The smaller-index points reduce to finite points. On a nonsingular Weierstrass curve over a field of odd characteristic, equal abscissas imply that the points are equal or opposite. Therefore
\[
 [a]\overline G=\pm[c]\overline G,
 \qquad b\mid a-c\quad\text{or}\quad b\mid a+c.
\]
Since $0<a,c<b$ and $a\ne c$, the first divisibility is impossible. The second, together with $0<a+c<2b$, gives
\begin{equation}\label{eq:b-sum}
 b=a+c.
\end{equation}

If $m=c$, use instead
\[
 X_c(X_a-X_b)=-(X_aX_b+N^2).
\]
Exactly the same argument gives $[a]\overline G=\pm[b]\overline G$. The inequalities $0<a,b<c$ and $a\ne b$ then force
\begin{equation}\label{eq:c-sum}
 c=a+b.
\end{equation}

\subsection{Contradiction with positivity}

Return to the points on $D_N$ in \eqref{eq:residual}. If \eqref{eq:b-sum} holds, then
\[
 Q_1-Q_2+Q_3=[a-b+c]G-T+(S+T)=S.
\]
If \eqref{eq:c-sum} holds, then
\[
 Q_1+Q_2-Q_3=[a+b-c]G+T-(S+T)=S.
\]
Both equations contradict Lemma~\ref{lem:positivity}. The rank-one assumption is impossible, and rank zero was already excluded in Remark~\ref{rem:rank-zero}. This proves Theorem~\ref{thm:main} and Corollary~\ref{cor:cuboid-rank}.


\section{Conclusions}\label{sec:conclusions}

The interchange used in the proof of Paulsen--West Theorem~4.2 is false at even indices, as the exact examples \eqref{eq:example-six} and \eqref{eq:example-five} demonstrate. Correcting its parity dependence leaves the pattern \eqref{eq:residual}. The missing case is settled by the unshifted-abscissa identity \eqref{eq:raw}, a primitive divisor at the largest index, and the positivity obstruction of Lemma~\ref{lem:positivity}.

The resulting conclusion is an emptiness statement: a congruent-number curve of rank at most one supports no positive cuboid configuration. In the original variables, a perfect cuboid would necessarily satisfy
\[
 \operatorname{rank} E_{\operatorname{sf}(abcg/2)}(\mathbb{Q})\ge2.
\]
Moreover, its three associated points must themselves generate a subgroup of rank at least two. This last formulation also excludes configurations contained in a subgroup of rank one inside a curve of higher rank.

The cuboid correspondence and the ambient-rank claim are due to Paulsen and West. The contribution here is a complete argument for the parity case omitted by their proof, together with the subgroup formulation that the argument directly establishes. It does not exclude configurations of generated rank at least two and therefore does not resolve the existence of perfect cuboids. No form of the Birch--Swinnerton-Dyer conjecture or the generalized Riemann hypothesis is assumed.

\section*{Acknowledgments}
The author thanks Randall L. Rathbun for helpful correspondence concerning the Paulsen--West paper and assistance with independent checks of the arguments and proofs.

\section*{AI assistance}
OpenAI GPT-6 Astra assisted in finding the explicit counterexamples to the claimed interchange of the sequences $A$ and $B$, developing and checking the corrected proof, and preparing the exposition.


\end{document}